\newtheorem{theorem}{Theorem}
\newtheorem{proposition}[theorem]{Proposition} 
\newtheorem{corollary}[theorem]{Corollary}
\theoremstyle{definition}
\newtheorem{definition}[theorem]{Definition}
\newtheorem{problem}[theorem]{Problem} 
\newtheorem{problems}[theorem]{Problems}
\theoremstyle{remark}
\newcommand{\brfrt}{\hspace{0 pt}}
\DeclareMathOperator{\cf}{cf}
\newcommand{\m}{\mathfrak}
\begin{document}
 
\title
{Some more Problems about Orderings of Ultrafilters}

\author{Paolo Lipparini} 
\address{Dipartimento  Matematic\ae\\Viale Ricerc\ae\ Scientific\ae\\II Universit\`a di Roma (Tor Vergata)\\I-00133 ROME ITALY}
\urladdr{http://www.mat.uniroma2.it/\textasciitilde lipparin}

\thanks{The results and problems presented in this note have been announced at
the Logical Meeting in honor of
Annalisa
Marcja, held in Florence, May 6    and 7, 2010. The author wishes to express his most sincere thanks the organizers for the invitation. He also wishes to express his warmest gratitude to Annalisa Marcja and Piero Mangani for their great support and encouragement in studying Logic and Model Theory.
The author has received support from MPI and GNSAGA. He wishes to express  his gratitude to Xavier Caicedo and Salvador Garcia-Ferreira for stimulating discussions and correspondence} 

\keywords{Ordering of ultrafilters, Rudin-Keisler order, Comfort order, compactness, abstract logic, regular ultrafilter} 

\subjclass[2000]{Primary 03E05,  03C95, 54A20, 03C20; Secondary 54D35, 54D80, 54H11}

\begin{abstract} 
 We discuss the connection between various orders on the class of
all the ultrafilters and certain compactness
properties of abstract logics and of topological spaces. We present a
model theoretical characterization of Comfort order.
We introduce a new order motivated by considerations in abstract model theory.
For each of the above orders, we show that 
if $E$ 
is a $( \lambda , \lambda )$-regular ultrafilter,
and $D$ 
is not $( \lambda , \lambda )$-regular, then
 $E \not \leq  D$.
Many  problems are stated.
\end{abstract} 
 
\maketitle  
 
We refer to \cite{C,CK,CN,Eb,gfpams,GF,GS,mru,Ma} for unexplained notions.

Many orderings on the class of all ultrafilters have been introduced. 
All of these orderings can be viewed from many different points of view, 
and lead to equivalent formulations of some notions, either in purely
ultrafilter theoretical terms, in topological terms, or in model theoretical terms.
We discuss some of these connections, introduce still another order motivated by
abstract model theoretical considerations, and state some further problems.

Throughout, let $D$ be an ultrafilter over some set $I$,
and $E$ be an ultrafilter over some set $J$. 

We first  recall the definition of the classical Rudin-Keisler order.

\begin{definition} \label{rk} 
For   $D$ and $E$ ultrafilters, the  \emph{Rudin-Keisler} (pre-)order
is defined as follows.

$E \leq _{RK} D$
 if and only if 
there is a function $f: I \to J$ such that,  
for every $Y \subseteq J$, it happens that
$Y \in E$ if and only if $f ^{-1}(Y) \in D$. 
\end{definition}   

Notice that, in the above situation, the ultrafilter structure of $E$
is completely determined by $f$ and by the ultrafilter structure of $D$.
 If $E \leq _{RK} D$, we sometimes will say that $E$ is a \emph{quotient} of $D$.   
The Rudin-Keisler order can be given several equivalent reformulations.

\begin{theorem} \label{equivrk}
For every pair of ultrafilters $D$ and $E$, the following are equivalent. 
\begin{enumerate}
\item
$E \leq _{RK} D$.
\item
for every model 
$\m A$, we have that
$\prod_E \m A$
is elementarily embeddable in 
$\prod_D \m A$.   
\item
Every $D$-pseudocompact topological space is $E$-pseudocompact.
\item
Every $D$-pseudocompact Tychonoff topological space is $E$-\brfrt pseudocompact. 
 \end{enumerate}  
 \end{theorem}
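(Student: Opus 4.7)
The strategy is to establish $(1)\Leftrightarrow(2)$ by a direct model-theoretic construction, deduce $(1)\Rightarrow(3)$ from the same construction, use the trivial $(3)\Rightarrow(4)$, and close the cycle with $(4)\Rightarrow(1)$, which is the main obstacle.

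For $(1)\Rightarrow(2)$, given $f\colon I\to J$ witnessing $E\leq _{RK} D$, I define $\Phi\colon \prod_E \m A \to \prod_D \m A$ by $\Phi([g]_E) = [g\circ f]_D$. Applying the Rudin-Keisler condition $Y\in E \Leftrightarrow f^{-1}(Y)\in D$ to each \L o\'s set $\{j : \m A\models \phi(g_1(j),\dots,g_n(j))\}$ shows that $\Phi$ is well-defined on equivalence classes, injective, and preserves every first-order formula, hence is an elementary embedding. For $(2)\Rightarrow(1)$, I equip $J$ with the structure $\m A$ having a unary predicate $P_Y$ interpreted as $Y$ for every $Y\subseteq J$. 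The element $[\mathrm{id}_J]_E$ satisfies $P_Y$ exactly when $Y\in E$ (by \L o\'s's theorem); writing its image under the elementary embedding as $[h]_D$ for some $h\colon I\to J$, elementarity forces $Y\in E \Leftrightarrow h^{-1}(Y)\in D$, so $h$ witnesses $E\leq _{RK} D$.

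For $(1)\Rightarrow(3)$, assume $f\colon I\to J$ witnesses $E\leq _{RK} D$ and $X$ is $D$-pseudocompact. For each $g\colon J\to X$, the composition $g\circ f\colon I\to X$ has some $D$-limit $x\in X$; combining the identity $\{i : g(f(i))\in U\}=f^{-1}\{j:g(j)\in U\}$ (for $U$ an open neighborhood of $x$) with the Rudin-Keisler condition shows that the right-hand set lies in $E$, making $x$ an $E$-limit of $g$. The implication $(3)\Rightarrow(4)$ is immediate.

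The main obstacle is $(4)\Rightarrow(1)$: assuming $E\not\leq _{RK} D$, one must exhibit a Tychonoff space that is $D$-pseudocompact yet not $E$-pseudocompact. The natural ambient is $\beta J$. One seeks a subspace $X\supseteq J$ of $\beta J$ closed under taking $D$-limits of $I$-sequences (so that $D$-pseudocompactness is built in) while excluding the point $E\in\beta J$; the identity inclusion $J\hookrightarrow X$ then has no $E$-limit in $X$, since its only candidate in $\beta J$ is the excluded point. The delicate step is producing such an $X$: a concrete candidate is to take $X$ to consist of $J$ together with all RK-quotients of $D$ on $J$, using $E\not\leq _{RK} D$ to guarantee $E\notin X$ and verifying closure under $D$-limits along the lines of the Garcia-Ferreira constructions in the cited references.
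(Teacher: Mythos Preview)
Your treatment of $(1)\Leftrightarrow(2)$ is correct and is the standard argument behind the Chang--Keisler exercise the paper cites. The paper itself gives no details for any of the implications, so your write-up is more informative; the issue is that in the topological half you are not proving the statement as written.

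Throughout $(1)\Rightarrow(3)$ and $(4)\Rightarrow(1)$ you treat $D$-pseudocompactness as ``every $I$-indexed sequence of \emph{points} has a $D$-limit in $X$''. That is $D$-\emph{compactness}, which the paper reserves for the Comfort order in Definition~\ref{comfort}. The notion relevant here is the Ginsburg--Saks/Garcia-Ferreira one: $X$ is $D$-pseudocompact if every $I$-indexed family of nonempty \emph{open} sets has a $D$-accumulation point. Your argument for $(1)\Rightarrow(3)$ adapts immediately once you replace the sequence $g$ by a family $(U_j)_{j\in J}$ and pull it back along $f$, so this is only a cosmetic gap.

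The gap is genuine in $(4)\Rightarrow(1)$. You correctly identify the candidate space $X=\{F\in\beta J: F\leq_{RK} D\}$ (this is essentially the space behind \cite[Lemma~1.4]{GF}), and your argument that $E\notin X$ witnesses failure of $E$-pseudocompactness via the singletons $\{j\}$ is fine. But your proposed verification that $X$ is $D$-pseudocompact---``closure under $D$-limits of $I$-sequences''---is both stronger than needed and, in general, \emph{false}. If $(F_i)_{i\in I}$ is a sequence in $X$ with each $F_i=(h_i)_*D$, its $D$-limit in $\beta J$ is the pushforward of the product ultrafilter $D\otimes D$ along $(i,i')\mapsto h_i(i')$; this is only guaranteed to be $\leq_{RK} D\otimes D$, and for most $D$ one has $D\otimes D\not\leq_{RK}D$. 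What actually makes $X$ $D$-pseudocompact is much simpler: given nonempty open $(U_i)_{i\in I}$ in $X$, use density of $J$ to pick $j_i\in U_i\cap J$; the pushforward $F=(i\mapsto j_i)_*D$ lies in $X$ by definition and is the required $D$-accumulation point. In other words, for subspaces of $\beta J$ containing $J$, $D$-pseudocompactness only requires closure under $D$-limits of sequences \emph{from $J$}, not from all of $X$, and that is exactly what your $X$ has.
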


\begin{proof}
(1) $\Leftrightarrow $   (2) is \cite[Exercise 4.3.41]{CK}.

(1) $\Rightarrow $  (3) $\Rightarrow $  (4) are trivial.

(4) $\Rightarrow $  (1) is immediate from \cite[Lemma 1.4]{GF}. 
\end{proof}

The notion of $D$-compactness
has played a very important role in the study
of compactness properties of topological spaces, particularly in connection
with products. The paper \cite{GS} is a milestone in the field.
The next definition is due to W. Comfort, and appears in 
\cite{gf3,gfpams}.

\begin{definition} \label{comfort}
The \emph{Comfort (pre-)order} is defined as follows.

$E \leq _{C} D $ if and only if every $D$-compact topological space is 
$E$-compact. 
 \end{definition}   

It can be shown that the Comfort order turns out to be the same if 
we restrict ourselves to Tychonoff spaces.
Moreover, Garcia-Ferreira \cite{gf3}
shows that   the Rudin-Keisler order is strictly finer
than the Comfort order.

We now give a model theoretical characterization of Comfort order. 

\begin{definition} \label{kclos}   
If $D$ is an ultrafilter, let us say that a class $K$ of models of the same type is
\emph{$D$-closed} if and only if $K$ is closed under isomorphism, $K$ is closed under elementary substructures, and any ultraproduct by $D$ of members of $K$ still
belongs to $K$.
\end{definition} 

\begin{proposition} \label{modcom}
Suppose that $D$ is an ultrafilter over $I$, and $E$
is an ultrafilter over $J$. Then  the following are equivalent. 

\begin{enumerate}
\item
$E \leq _{C} D$.
\item
Every $D$-closed class of models is $E$-closed.
\item 
For every model $\m A$, the smallest
$D$-closed class containing $\m A$ is $E$-closed. 
\item 
For every model $\m A$ with $|A| = \sup \{ |I|, |J| \} $, the smallest
$D$-closed class containing $\m A$ is $E$-closed. 
\end{enumerate}  

 \end{proposition}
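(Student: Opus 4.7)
The implications $(2) \Rightarrow (3) \Rightarrow (4)$ are immediate: for the first, apply (2) to the smallest $D$-closed class containing $\m A$; the second is a specialization in cardinality.

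For $(1) \Rightarrow (2)$, I would pass to the Stone space $S_\tau$ of complete first-order theories of the signature $\tau$ of $K$, and consider the subset $S_K = \{\mathrm{Th}(\m B) : \m B \in K\} \subseteq S_\tau$. Using closure of $K$ under $D$-ultraproducts together with the fundamental theorem of ultraproducts, one verifies that for any $h \colon I \to S_K$, picking representatives $\m B_i \in K$ with $\mathrm{Th}(\m B_i) = h(i)$ and forming $\prod_D \m B_i \in K$ yields exactly the $D$-limit of $h$ in $S_\tau$, which thus lies in $S_K$; hence $S_K$ is a $D$-compact subspace of $S_\tau$. Applying $E \leq_C D$, the space $S_K$ is $E$-compact, so the $E$-limit of $j \mapsto \mathrm{Th}(\m A_j)$ lies in $S_K$, giving some $\m B \in K$ elementarily equivalent to $\prod_E \m A_j$. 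To upgrade this to $\prod_E \m A_j \in K$, one iterates $D$-ultrapowers of $\m B$, each of which remains in $K$, until the result is saturated enough to admit $\prod_E \m A_j$ as an elementary substructure; that substructure then lies in $K$ by closure under elementary substructures.

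For $(4) \Rightarrow (1)$, assume toward a contradiction that $E \not\leq_C D$, so there are a $D$-compact space $X$ and a map $g \colon J \to X$ with no $E$-accumulation point; for each $x \in X$ pick an open $U_x \ni x$ with $g^{-1}(U_x) \notin E$. First reduce to $|X| \leq \sup\{|I|,|J|\}$ by passing to the $T_0$-quotient of $\overline{g(J)}$ equipped with a conveniently chosen basis, checking that $D$-compactness is preserved. Then encode $X$, $J$, $g$, the basis and the assignment $x \mapsto U_x$ as a model $\m A$ of the required cardinality, arranged so that $D$-compactness of $X$ is witnessed by a property stable under elementary substructures and $D$-ultraproducts. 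By (4), $\m A^J/E$ lies in the smallest $D$-closed class containing $\m A$; inside $\m A^J/E$ the element $[g]_E$ is then forced by the transported $D$-compactness to correspond to a genuine element of $X$ that is an $E$-accumulation point of $g$, contradicting the choice of the $U_x$.

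The hardest steps will be, in $(1) \Rightarrow (2)$, promoting the elementary equivalence $\m B \equiv \prod_E \m A_j$ to actual membership $\prod_E \m A_j \in K$ through a saturation argument via iterated $D$-ultrapowers; and, in $(4) \Rightarrow (1)$, carrying out the cardinality reduction of $X$ and writing down a first-order encoding of $D$-compactness strong enough to survive the transport through $D$-ultraproducts and elementary substructures and to deliver the contradiction inside $\m A^J/E$.
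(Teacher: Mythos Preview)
Your Stone-space idea for $(1)\Rightarrow(2)$ is correct up to the point where you obtain $\m B\in K$ with $\m B\equiv\prod_E\m A_j$; the gap is the ``upgrade to membership'' step. To reach enough saturation by iterating $D$-ultrapowers you must go transfinitely and take unions of elementary chains at limit stages, but $D$-closedness gives you no closure under such unions, so the limit model need not lie in $K$. Finite (or even $\omega$-length) iteration does not suffice: for a countably incomplete $D$ on $\omega$ every finite iterate of $\prod_D$ is only $\aleph_1$-saturated, while $\prod_E\m A_j$ may have arbitrarily large cardinality; and if $D$ happens to be $\aleph_1$-complete, $\prod_D$ need not increase saturation at all. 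Nothing in the bare hypothesis $E\leq_C D$ supplies a single ultrafilter $F$ with $\prod_F\m B\in K$ and $\prod_F\m B$ sufficiently saturated, so the step as written does not go through.

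Your plan for $(4)\Rightarrow(1)$ is more seriously underspecified. ``$D$-compactness of $X$'' quantifies over all $I$-indexed sequences in $X$, and it is not clear how to encode it in $\m A$ so that it both survives passage along the $D$-closed class and forces the nonstandard element $[g]_E$ of $X^J/E$ to correspond to an actual point of $X$; a priori $[g]_E$ lives in a strictly larger set, and any first-order ``limit'' function you add will, after the ultraproduct, take values in that larger set as well.

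The paper sidesteps both difficulties by importing Garcia-Ferreira's theorem that, once $D$ and $E$ are taken over the same $\alpha$, one has $E\leq_C D$ iff $E\in\beta_D(\alpha)$, the least $D$-compact subspace of $\beta(\alpha)$ containing $\alpha$. For $(1)\Rightarrow(2)$: every $E'\in\beta_D(\alpha)$ arises from principal ultrafilters by iterated $D$-limits; a $D$-limit of $(E_i)_{i\in I}$ is Rudin--Keisler below the sum $\sum_D E_i$, and $\prod_{\sum_D E_i}\m A\cong\prod_D\prod_{E_i}\m A$, so an induction on the construction of $\beta_D(\alpha)$ shows every $D$-closed class is $E'$-closed for all such $E'$, in particular for $E$ --- no saturation argument is needed. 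For $(4)\Rightarrow(1)$: the same description identifies the smallest $D$-closed class containing $\m A$ as $\{\prod_{E'}\m A:E'\in\beta_D(\alpha)\}$ up to isomorphism; taking $\m A$ to be the \emph{complete} structure on $\alpha$ (all relations and functions named), the isomorphism type of $\prod_{E'}\m A$ determines $E'$, so from $\prod_E\m A\in K$ one reads off $E\in\beta_D(\alpha)$ directly.
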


 \begin{proof}
Without loss of generality, we can suppose that $E$ and $D$ are 
(not necessarily uniform) over
the same cardinal $\alpha$.

Garcia-Ferreira \cite[Theorem 2.3]{gf3} and \cite[Theorem 3.3]{gfpams} 
proved that $E \leq _{C} D$ if and only if 
$E \in \beta _D ( \alpha )$, where  
$\beta _D ( \alpha )$ is the $D$-compactification of 
the discrete space $\alpha$, that is, the smallest $D$-compact
subspace of $\beta ( \alpha )$ containing $\alpha$, where
$\beta ( \alpha )$ denotes  the Stone-\v Cech compactification
of $\alpha$.

(1) $\Rightarrow $  (2)
Suppose that $E \in \beta _D ( \alpha )$.
Since $\beta _D ( \alpha )$ is the  smallest $D$-compact
subspace of $\beta ( \alpha )$ containing $\alpha$, it follows that 
every element of $\beta _D ( \alpha )$ can be iteratively constructed 
starting by $\alpha$, and taking $D$-limits
 of ultrafilters already known to be in $\beta _D ( \alpha )$ (see 
\cite{gfpams} for details). It is well-known, already from \cite{Fr}, that the $D$-limit
of certain ultrafilters $(E_i) _{i \in I} $ corresponds to 
a quotient of the sum $ \sum _D E_i$ (see \cite{mru} for the definition).
Model theoretically, if  $ D'=\sum _D E_i$, then, for
every model $\m A$, $\prod _{D'} \m A $ is isomorphic to
$\prod _D \prod _{E_i} \m A$. This implies that
every class of models which is both $D$-closed and $E_i$-closed for every 
$i \in I$ is also $D'$-closed.   

Moreover, if $D'' \leq _{R} D' $, then, for every 
model $\m A$, $\prod _{D''} \m A $ is 
 elementarily embeddable in 
$\prod_{D'} \m A$, by
Theorem \ref{equivrk} (1) $\Rightarrow $  (2).
Thus, every class of models which is $D'$-closed 
 is also $D''$-closed.   

By iterating the above arguments, and by the above description
of $\beta _D ( \alpha )$,
we get that if $E \in \beta _D ( \alpha )$,
then every 
 $D$-closed class of models is $E$-closed.

(2) $\Rightarrow $  (3) $\Rightarrow $  (4) are trivial.

(4) $\Rightarrow $  (1)
By the above arguments, 
for every model $\m A$, the smallest
$D$-closed class containing $\m A$ is the class of all 
isomorphic copies of the models of the form
$ \prod _{E'} \m A$, for $E' \in \beta _D ( \alpha )$.

Let $\m A$ be the complete model of cardinality $\alpha$,
and let $K$ be 
the smallest
$D$-closed class containing $\m A$.
By assumption, $K$ is $E$-closed;
in particular, $\prod _E \m A \in K$.
By the above remark,
$\prod _E \m A \in K$ is isomorphic to
$ \prod _{E'} \m A$, for some $E' \in \beta _D ( \alpha )$.
Now,  since  $|A|= \alpha $, both $E$
and $E'$ are over $\alpha$,  and $\m A$
is a complete structure, then the ultrafilter structures of
$E$
and $E'$ can be recovered from the structures, respectively, of
$\prod _E \m A$ and of
$ \prod _{E'} \m A$.
Since these latter models are isomorphic,
$E$
and $E'$ are isomorphic,
thus
$E \in \beta _D ( \alpha )$, since
$E' \in \beta _D ( \alpha )$.

\end{proof}

\begin{problem} \label{comfnorm}
What does the Comfort order become when we restrict 
ourselves to special classes of topological spaces?

In more detail, if $T$ is a class of topological spaces, let us define
the \emph{Comfort (pre-)order relative to} $T$ as follows.

$E \leq _{T, C} D$ if and only if every 
$D$-compact topological space belonging to $T$
is also $E$-compact.

Does $E \leq _{ T, C} D$ coincide with $E \leq _{C} D$, when $T$
is the class of Hausdorff normal topological spaces? 

Does $E \leq _{T, C} D$ coincide with $E \leq _{C} D$, when $T$
is the class of topological groups? 
 \end{problem}  

In the particular case of ultrafilters over $ \omega$, the order
$\leq _{T, C}$ has been introduced by  Garcia-Ferreira \cite{gfgrp}.
He also asked whether $E \leq _{T, C} D$ coincides with $E \leq _{C} D$,
 when $T$
is the class of topological groups,
and gave a partial affirmative answer.

We now introduce an analogue of the Comfort order, an analogue
which refers to compactness
properties of abstract logics.

By a \emph{logic} we mean an extension of first-order logic satisfying certain
regularity properties (see \cite{Eb} for more details). Examples of logics in the present sense are logics allowing infinitary conjunctions and disjunctions (infinitary logics), or logics obtained by adding new quantifiers (e. g., cardinality logics). 
As far as the present note is concerned, the exact closure properties a logic is required to satisfy are those listed at the beginning of  \cite[Section 2]{C}.
 
Makowsky and Shelah \cite{MS} defined the notion of an
ultrafilter {\it related} to a logic, and found many applications of this notion
(see \cite{Ma} for a survey). Essentially, an ultrafilter is related to a logic if and only if a version of \L o\'s Theorem holds for that ultrafilter and that logic.
Later \cite{Cprepr,C} gave an improved definition, and extended Makowsky and Shelah's results to a more general setting. We shall usually say that a logic $\mathcal L$ is \emph{$D$-compact}, in place of saying  that $D$ is related to $\mathcal L$.
 
\begin{definition} \label{mkc}
We define as follows the Caicedo-Makowsky-Shelah
\mbox{(pre-)order}.

For ultrafilters $D$ and $E$,
we write $E \leq _{CMS} D$
to mean that every $D$-compact logic is $E$-compact. 
\end{definition}   

The next proposition, asserting  that
$\leq _{C}$ is finer than $E \leq _{CMS} D$,
is an immediate corollary of results from \cite{C}.

\begin{proposition} \label{cmsc}
For every pair of ultrafilters $D$ and $E$, if
$E \leq _{C} D$, then $E \leq _{CMS} D$.
 \end{proposition}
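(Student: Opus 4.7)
The plan is to interpose a topological characterization of when an abstract logic is $D$-compact, due to Caicedo, and then read off the proposition from the definition of the Comfort order.

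First I would invoke the reformulation, given in \cite{C} as a refinement of Makowsky--Shelah, of the $D$-compactness of a logic $\mathcal{L}$ as the $D$-compactness of a canonically associated topological space, or family of spaces, built from the sentences and models of $\mathcal{L}$. Roughly, one associates to every $\mathcal{L}$-theory $\Sigma$ a product space $X_\Sigma$, indexed by sentences or finite pieces of $\Sigma$, of compact Hausdorff spaces of partial models; $\mathcal{L}$ is $D$-compact precisely when every such $X_\Sigma$ arising from a suitably finitely consistent $\Sigma$ is $D$-compact as a topological space in the sense of \cite{GS}. It is this two-sided bridge between the logical and topological notions of $D$-compactness that does all the work.

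Given this translation, the argument is essentially a one-line diagram chase. Suppose $\mathcal{L}$ is $D$-compact and let $\Sigma$ be an $\mathcal{L}$-theory that we wish to satisfy in the $E$-sense. By the characterization, the associated space $X_\Sigma$ is $D$-compact. Since $E \leq _{C} D$, Definition \ref{comfort} yields that $X_\Sigma$ is also $E$-compact. Applying the reverse direction of the characterization, $\Sigma$ is satisfiable in the $E$-sense, and so $\mathcal{L}$ is $E$-compact, which is exactly $E \leq _{CMS} D$.

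The main obstacle is not really mathematical but bibliographical: one has to pinpoint the exact result of \cite{C} that supplies both directions of the equivalence between logical $D$-compactness and topological $D$-compactness of the associated spaces, and verify that it holds at the full generality of abstract logics satisfying only the closure properties listed at the beginning of \cite[Section 2]{C}, without any additional hypothesis such as being of Lindstr\"om type or having only set-many sentences. Once the correct citation is in place, no further combinatorial or model-theoretic work is required.
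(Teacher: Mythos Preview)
Your approach is exactly the one the paper takes: invoke \cite[Lemma~2.3]{C}, which says that $D$-compactness of a logic $\mathcal{L}$ is equivalent to $D$-compactness of certain topological spaces $E_\sigma(\mathcal{L})$ depending only on $\mathcal{L}$ and not on $D$, and then read off the result from the definition of $\leq_C$. Your only remaining task is the bibliographical one you already identified, namely to cite Lemma~2.3 of \cite{C} specifically.
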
  

\begin{proof}
The proposition follows immediately from \cite[Lemma 2.3]{C},
which asserts that $D$-compactness of some logic $\mathcal L$ 
is equivalent to $D$-compactness of certain topological spaces
$E_ \sigma ( {\mathcal L})$ (whose definition depends only on 
$\mathcal L$, and not on $D$).   
\end{proof}

\begin{problems} \label{mainprb}
Does
$E \leq _{C} D$ coincide with $E \leq _{CMS} D$?

Does
$E \leq _{T,C} D$ coincide with $E \leq _{CMS} D$,
for $T$ the class of topological groups?
for $T$ the class of Hausdorff normal topological spaces?

More generally, study the (pre-)order 
$E \leq _{CMS} D$, and  characterize it in 
topological and set theoretical terms.
 \end{problems} 

Notice that, by \cite[Lemma 2.3]{C},
$E \leq _{T,C} D$ coincides with $E \leq _{CMS} D$,
when $T$ is the class of the topological spaces of the form 
$E_ \sigma ( {\mathcal L})$, as defined in \cite{C}. 

We now show that $( \lambda , \lambda )$-regularity constitute a ``dividing line''
for each of the above orderings.

\begin{proposition} \label{partial}
Suppose that $\lambda$ is an infinite cardinal, $D$ 
is not $( \lambda , \lambda )$-regular, and $E$ 
is $( \lambda , \lambda )$-regular. Then the following holds.

\begin{enumerate}
\item 
 $E \not \leq _{ C} D$.
\item
 More generally, $E \not \leq _{T, C} D$,
 where $T$
is the class of Hausdorff normal topological spaces.
\item
 $E \not \leq _{T, C} D$,
 where $T$
is the class of Tychonoff topological groups.
\item
$E \not\leq _{CMS} D$.
 \end{enumerate}  
 \end{proposition}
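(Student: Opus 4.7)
The plan is to construct a single topological space $Y$ that simultaneously witnesses parts (1)--(3), and then to derive part (4) by transferring to the logical side via Caicedo's correspondence.

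Take $Y = \{\chi_A : A \in [\lambda]^{<\lambda}\} \subseteq \{0,1\}^\lambda$, the set of characteristic functions of subsets of $\lambda$ of cardinality less than $\lambda$, endowed with the subspace topology from the product topology on $\{0,1\}^\lambda$. The crux is the following pair of verifications. First, $Y$ is $D$-compact: given $h \colon I \to Y$ with $h(i) = \chi_{A_i}$, set $B = \{\alpha < \lambda : \{i \in I : \alpha \in A_i\} \in D\}$. If we had $|B| = \lambda$, then, picking any bijection $B \cong \lambda$, the map $i \mapsto A_i \cap B$ would witness $(\lambda,\lambda)$-regularity of $D$, contradicting the hypothesis; hence $|B| < \lambda$, so $\chi_B \in Y$, and by the ultrafilter property $\chi_B$ is a $D$-limit of $h$ in the product topology. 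Second, $Y$ is not $E$-compact: with $f \colon J \to [\lambda]^{<\lambda}$ a regularity witness for $E$, the only $E$-limit of $j \mapsto \chi_{f(j)}$ in the compact Hausdorff space $\{0,1\}^\lambda$ is the all-ones function $\chi_\lambda$, which does not belong to $Y$.

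This yields (1) immediately. For (2), $Y$ is Hausdorff as a subspace of $\{0,1\}^\lambda$, and normal by the Corson--Engelking theorem on normality of $\Sigma$-products of metric spaces (together, if $\lambda > \omega_1$, with a suitable generalisation to $<\lambda$-supported products). For (3), $Y$ is closed under coordinatewise symmetric difference because $|A \triangle B| \leq \max(|A|,|B|) < \lambda$ whenever $|A|,|B| < \lambda$; thus $Y$ is a topological subgroup of the compact Tychonoff group $\{0,1\}^\lambda$, in particular Tychonoff itself. For (4), apply \cite[Lemma 2.3]{C} to transport the compactness dichotomy from $Y$ to a logic $\mathcal L$ whose associated space $E_\sigma(\mathcal L)$ exhibits the same $D$-compactness behaviour---for instance an infinitary $L_{\lambda\lambda}$-like logic, or a Makowsky--Shelah cardinality-quantifier logic tuned to $(\lambda,\lambda)$-regularity---so that $\mathcal L$ is $D$-compact but not $E$-compact, giving $E \not\leq_{CMS} D$.

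The main obstacle I anticipate is the normality of $Y$ for large $\lambda$: the classical Corson--Engelking result handles the countable-support case ($\lambda = \omega_1$), while for larger $\lambda$ a stronger theorem, or a replacement of $Y$ by a carefully chosen closed subspace preserving the compactness dichotomy, will be needed. A second delicate step is part (4), where identifying the precise abstract logic requires care and may rely on adapting the Makowsky--Shelah--Caicedo correspondence rather than citing a single theorem directly.
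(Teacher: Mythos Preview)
Your space $Y=\{\chi_A:A\in[\lambda]^{<\lambda}\}$ and the verification that it is $D$-compact but not $E$-compact are correct; this is essentially the content of the references \cite{topproc,nuotop} that the paper simply cites for parts (2) and (3). So for (1) and (3) you have given a self-contained reproof of those cited results: $Y$ is a Tychonoff subgroup of $\{0,1\}^\lambda$, and the regularity equivalence is exactly as you compute.

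The normality worry you raise for (2) is genuine. Corson's theorem handles only countable supports, and a dense subgroup of a compact group need not be normal in general, so the passage to arbitrary $\lambda$ really does require further work. The paper avoids the issue by quoting \cite[Proposition 1]{topproc} and \cite[Corollary 2]{nuotop}, where the needed normal example is built; you would have to either reproduce that argument or prove normality of your $Y$ directly.

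The substantive gap is (4). Caicedo's \cite[Lemma 2.3]{C} associates to each logic $\mathcal L$ a family of spaces $E_\sigma(\mathcal L)$ whose $D$-compactness is equivalent to that of $\mathcal L$; it does \emph{not} associate a logic to a prescribed space. Hence you cannot ``transport'' the dichotomy from $Y$ to some $\mathcal L$: you would first have to name $\mathcal L$ and then show its spaces $E_\sigma(\mathcal L)$ behave like $Y$, which is just the original problem in disguise. The paper works directly on the logical side. For $\lambda$ regular it picks $\omega_\alpha$ with $\cf\omega_\alpha=\lambda$ and $\omega_\beta^{|I|}<\omega_\alpha$ for every $\beta<\alpha$; classical results (cf.\ \cite{arch}) then give that $\mathcal L_{\omega,\omega}(Q_\alpha)$ is $D$-compact, while the $E$-ultrapower of $\langle\lambda,\le\rangle$ shows it is not $E$-compact. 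The singular case uses two cardinality quantifiers, via the equivalence of $(\lambda,\lambda)$-regularity with $(\cf\lambda,\cf\lambda)$- or $(\lambda^+,\lambda^+)$-regularity. Your phrase ``cardinality-quantifier logic tuned to $(\lambda,\lambda)$-regularity'' gestures toward this, but the careful choice of $\alpha$ (in particular the growth condition $\omega_\beta^{|I|}<\omega_\alpha$) and the verification of $D$-compactness are the actual content, and neither is supplied by your outline.
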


  \begin{proof}
(1) follows from either (2), (3) or (4).

(2) \cite[Proposition 1]{topproc} and \cite[Corollary 2]{nuotop} constructed a 
Hausdorff normal topological space $X$ such that,
for every ultrafilter $F$, $X$ is $F$-compact if and only if 
$F$ 
is not $( \lambda , \lambda )$-regular.
Thus, $X$ is $D$-compact, but not $E$-compact, hence
 $E   \leq _{T, C} D$ fails.

(3) is similar, using \cite[Proposition 3]{nuotop}.

(4) First suppose that $\lambda$ is regular. Let $ \omega _ \alpha $
be a cardinal such that $ \cf \omega_ \alpha = \lambda $, 
and $ \omega _ \beta ^{|I|} < \omega _ \alpha  $, for every $\beta < \alpha $.
Then classical methods (see, e. g., \cite{arch}) imply that 
$\mathcal L _{ \omega, \omega } (Q _ \alpha )$   is $D$-compact. This logic is not
$E$-compact, since no elementary extension of 
$ \prod _E \langle \lambda , \leq \rangle $ 
can be $\mathcal L _{ \omega, \omega } (Q _ \alpha )$-equivalent to 
 $ \prod _E \langle \lambda , \leq \rangle $.

The case when $\lambda$ is singular is treated in a similar way, by using a logic generated by two cardinality quantifiers, since an ultrafilter 
is $( \lambda , \lambda   )$-regular if and only if it is either 
 $( \cf \lambda , \cf \lambda   )$-regular or $( \lambda^+ , \lambda^+   )$-regular 
\cite{mru}.
 \end{proof} 

Proposition \ref{partial} strongly suggests the hypothesis that the study of
compactness properties both of logics and
of (products of) topological spaces  
actually deals with properties of
 the Comfort and related orders, and that problems
about (transfer of) compactness are best stated as
problems about these orders. Indeed, results stated in terms
of $D$-compactness are more powerful than results
stated in terms of $[ \lambda , \lambda ]$-compactness.
In fact, older results by the author are immediate consequences of 
 Proposition \ref{partial}.

\begin{corollary} \label{transf}
For every infinite cardinals $\lambda$ and $\mu$, the following are equivalent.
\begin{enumerate} 
\item
There exists a $( \lambda , \lambda  )$-regular not $(\mu, \mu)$-regular
ultrafilter.  
\item 
There exists a productively $[ \lambda , \lambda  ]$-compact not
productively  $[\mu, \mu]$-compact topological space.
\item 
There exists a productively $[ \lambda , \lambda  ]$-compact not
productively  $[\mu, \mu]$-compact Tychonoff topological group.
\item 
There exists a productively $[ \lambda , \lambda  ]$-compact not
productively  $[\mu, \mu]$-compact Hausdorff normal topological space.
\item 
There exists a $[ \lambda , \lambda  ]$-compact not
 $[\mu, \mu]$-compact logic.
\end{enumerate}  
 \end{corollary}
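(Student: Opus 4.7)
The plan is to derive Corollary \ref{transf} directly from Proposition \ref{partial} by combining it with the classical ultrafilter-theoretic characterization of (productive) $[\lambda,\lambda]$-compactness. For logics, the relevant fact is the Makowsky--Shelah theorem: a logic $\mathcal L$ is $[\lambda,\lambda]$-compact if and only if $\mathcal L$ is $D$-compact for some $(\lambda,\lambda)$-regular ultrafilter $D$. For spaces, the corresponding characterization in the author's older papers states that a suitable topological space $X$ is productively $[\lambda,\lambda]$-compact if and only if, for every sufficiently large index set $I$, there exists a $(\lambda,\lambda)$-regular ultrafilter $D$ on $I$ with respect to which $X$ is $D$-compact.

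For the direction (1) $\Rightarrow$ (4) (respectively (3)): let $F$ be a $(\lambda,\lambda)$-regular, not $(\mu,\mu)$-regular ultrafilter. Apply Proposition \ref{partial}(2) (respectively (3)) with $\mu$ in the role of $\lambda$ to produce a Hausdorff normal space (respectively, a Tychonoff topological group) $X$ with the property that, for every ultrafilter $G$, $X$ is $G$-compact if and only if $G$ fails to be $(\mu,\mu)$-regular. In particular $X$ is $F$-compact. The standard sum-of-ultrafilters construction yields, on every sufficiently large index set, a $(\lambda,\lambda)$-regular ultrafilter that inherits non-$(\mu,\mu)$-regularity from $F$; by the defining property of $X$ each such ultrafilter makes $X$ compact, so $X$ is productively $[\lambda,\lambda]$-compact. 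Conversely, $X$ is not $G$-compact for any $(\mu,\mu)$-regular $G$, so by the same characterization $X$ cannot be productively $[\mu,\mu]$-compact. Item (2) follows a fortiori from (4). For (1) $\Rightarrow$ (5), invoke Proposition \ref{partial}(4) instead: the logic $\mathcal L _{\omega,\omega}(Q_\alpha)$ constructed there with $\mu$ in place of $\lambda$ is $F$-compact, hence $[\lambda,\lambda]$-compact by Makowsky--Shelah; but it fails to be $G$-compact for any $(\mu,\mu)$-regular $G$, hence is not $[\mu,\mu]$-compact.

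For the converse directions: suppose no $(\lambda,\lambda)$-regular, non-$(\mu,\mu)$-regular ultrafilter exists, so that every $(\lambda,\lambda)$-regular ultrafilter is also $(\mu,\mu)$-regular. Given any space $X$ witnessing one of (2)--(4), the ultrafilter characterization supplies, on each sufficiently large index set, a $(\lambda,\lambda)$-regular $D$ for which $X$ is $D$-compact; each such $D$ is then also $(\mu,\mu)$-regular, so $X$ is productively $[\mu,\mu]$-compact, a contradiction. The same contrapositive reasoning, using Makowsky--Shelah in place of the topological characterization, handles (5) $\Rightarrow$ (1).

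The main obstacle I anticipate is the uniformity step needed for the productive clauses (2)--(4): promoting the single ultrafilter $F$ to a $(\lambda,\lambda)$-regular, non-$(\mu,\mu)$-regular ultrafilter on each sufficiently large index set. This uses standard sum-of-ultrafilter techniques together with the (classical but delicate) preservation of non-$(\mu,\mu)$-regularity under appropriate such sums. Once this uniformity is in place, the corollary becomes essentially a bookkeeping consequence of Proposition \ref{partial}, which is precisely the point the author wishes to illustrate in the surrounding discussion.
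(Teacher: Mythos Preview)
Your proposal is correct and is essentially the argument the paper has in mind: the paper gives no explicit proof of the corollary, merely stating that it is an ``immediate consequence'' of Proposition~\ref{partial} together with the standard ultrafilter characterizations of productive $[\lambda,\lambda]$-compactness for spaces and of $[\lambda,\lambda]$-compactness for logics.

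One remark is worth making, however: the ``main obstacle'' you anticipate is illusory, and your forward direction is more complicated than necessary. For (1) $\Rightarrow$ (2)--(4) you do not need to promote $F$ to $(\lambda,\lambda)$-regular non-$(\mu,\mu)$-regular ultrafilters on arbitrarily large index sets. The point is that $D$-compactness is preserved under arbitrary products (Ginsburg--Saks \cite{GS}); hence once your space $X$ is $F$-compact for the single $(\lambda,\lambda)$-regular ultrafilter $F$, every power of $X$ is already $F$-compact and therefore $[\lambda,\lambda]$-compact. No sum-of-ultrafilters construction, and no delicate preservation of non-$(\mu,\mu)$-regularity under sums, is needed. (A minor point of attribution: it is the \emph{proof} of Proposition~\ref{partial}, via the cited references \cite{topproc,nuotop}, that actually supplies the space $X$ with the stated $G$-compactness dichotomy; the proposition itself is phrased purely in terms of the orderings.)
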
  

Thus, a more detailed study of the orders 
 $E  \leq _{T, C} D$
and
$E \leq _{CMS} D$
will probably shed more light to  problems connected with compactness of logics and topological spaces.

It will be probably useful also to consider the possibility of dealing
 with more than two ultrafilters at a time.

\begin{problem} \label{multi} 
Study the following relations (which are not pre-orders).

For families $(E_k) _{k \in K}  $
and $(D_h) _{h \in H} $ 
of ultrafilters, let
 $ (E_k) _{k \in K}  \leq _{ C} (D_h) _{h \in H} $
mean that
every topological space which is 
$D_h$-compact, for every $ h \in H$,
is $E_k$-compact, for some  $k \in K$.

The relation
 $ (E_k) _{k \in K}  \leq _{T, C} (D_h) _{h \in H} $,
for $T$ a class of topological spaces is defined similarly. 

Similarly, let  $ (E_k) _{k \in K}  \leq _{ CMS} (D_h) _{h \in H} $ 
mean that
every logic which is 
$D_h$-compact, for every $ h \in H$,
is $E_k$-compact, for some  $k \in K$.
\end{problem} 

\begin{proposition} \label{rel} 
Let $\leq$ be any one of the following relations:
$ \leq _{ C}$, $ \leq _{ CMS}$, or 
$ \leq _{T, C}$, where $T$ is a class of topological 
spaces closed under taking Frechet disjoint unions in the sense of 
\cite[Definition 7]{nuotop}.

Then  $ (E_k) _{k \in K}  \leq _{ } (D_h) _{h \in H} $
if and only if there is some $k \in K$ such that  
$ E_k  \leq _{ } (D_h) _{h \in H} $.  
\end{proposition}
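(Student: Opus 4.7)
\emph{Proof plan.} The ``if'' direction is immediate from the definitions: if $E_{k_0} \leq (D_h)_{h\in H}$ for some $k_0 \in K$, then every object (a space in $T$, or a logic) which is $D_h$-compact for every $h \in H$ is in particular $E_{k_0}$-compact, so the condition defining the multi-variable relation is met with the witness $k = k_0$. For the ``only if'' direction, I argue by contrapositive. Assume that $E_k \not\leq (D_h)_{h\in H}$ for every $k \in K$. Then for each $k$ I pick a witness $X_k$ — a topological space in $T$ for the topological orders, or a logic for $\leq_{CMS}$ — which is $D_h$-compact for every $h \in H$ yet fails to be $E_k$-compact. The strategy is to amalgamate the $X_k$ into a single object $X$ that is still $D_h$-compact for every $h\in H$, while simultaneously failing to be $E_k$-compact for \emph{every} $k\in K$; such an $X$ refutes $(E_k)_{k\in K} \leq (D_h)_{h\in H}$.

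For the two topological orders $\leq_C$ and $\leq_{T,C}$ the amalgam is $X := \bigsqcup_{k\in K} X_k$, the Frechet disjoint union of \cite[Definition 7]{nuotop}. Two standard properties of this construction supply what is needed. First, the Frechet sum of $F$-compact spaces is $F$-compact, so $X$ is $D_h$-compact for every $h \in H$. Second, each summand $X_k$ embeds in $X$ as a closed subspace, and closed subspaces of $F$-compact spaces are $F$-compact; since $X_k$ is not $E_k$-compact, neither is $X$. The hypothesis that $T$ is closed under Frechet disjoint unions finally places $X$ back in $T$, so $X$ is a legitimate witness refuting the multi-variable version of $\leq_{T, C}$ (and, taking $T$ to be all topological spaces, also $\leq_C$).

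For the $\leq_{CMS}$ case I reduce to the topological setting using the remark after Proposition \ref{cmsc}, based on \cite[Lemma 2.3]{C}: $E \leq_{CMS} D$ coincides with $E \leq_{T_0, C} D$, where $T_0 := \{ E_\sigma(\mathcal L) : \mathcal L\text{ a logic}\}$. This equivalence passes routinely to the multi-variable setting, so it suffices to verify that $T_0$ is closed under Frechet disjoint unions. Concretely, given a family $(\mathcal L_k)_{k \in K}$ of logics, one should construct a logic $\mathcal L$ whose associated space $E_\sigma(\mathcal L)$ is (homeomorphic to, or at least interchangeable with for $D$-compactness purposes) $\bigsqcup_{k \in K} E_\sigma(\mathcal L_k)$ — heuristically, by juxtaposing the vocabularies and new quantifier symbols of the $\mathcal L_k$ in parallel. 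The main obstacle is precisely this verification: one must unfold the $E_\sigma(-)$ construction from \cite{C}, check that it commutes with Frechet sums, and confirm that the resulting amalgamated logic still satisfies the closure requirements of a logic in the sense of \cite{C}. Once this is done, the topological argument of the previous paragraph applies verbatim.
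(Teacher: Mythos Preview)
Your treatment of the topological orders $\leq_C$ and $\leq_{T,C}$ is essentially the paper's own argument: take witnesses $X_k$, form the Frechet disjoint union, and invoke the two facts (preservation of $F$-compactness under the Frechet sum, and reflection of non-$F$-compactness via closed summands) that the paper packages as \cite[Proposition 8]{nuotop}.

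For $\leq_{CMS}$ your route diverges from the paper's, and the detour creates the very obstacle you flag. The paper does \emph{not} pass through the class $T_0=\{E_\sigma(\mathcal L)\}$ at all; it simply takes the ``union of logics'' $\mathcal L=\bigcup_{k\in K}\mathcal L_k$ and argues directly: $D$-compactness of a logic is a \L o\'s-type condition checked sentence by sentence, so a union of $D_h$-compact logics is again $D_h$-compact; and since each $\mathcal L_k$ is a sublogic of $\mathcal L$ that already fails $E_k$-compactness, $\mathcal L$ fails $E_k$-compactness for every $k$. This gives the desired witness in one line. Your plan instead asks that $T_0$ be closed under Frechet disjoint unions, i.e., that $E_\sigma(\bigcup_k\mathcal L_k)$ be (up to $F$-compactness) the Frechet sum of the $E_\sigma(\mathcal L_k)$. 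Even if that is true, it is strictly more than you need: by the very Lemma~2.3 of \cite{C} you are invoking, $F$-compactness of $E_\sigma(\mathcal L)$ is already equivalent to $F$-compactness of $\mathcal L$, so once you have built the union logic you may as well reason about it directly and bypass the topological identification altogether. In short, your reduction is correct but circuitous; the paper's direct argument with a union of logics avoids the ``main obstacle'' you identify.
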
 

 \begin{proof} 
The if-condition is trivial.

For the converse, suppose by contradiction that,
for every $k \in K$, we have
$ E_k  \not \leq _{ T,C} (D_h) _{h \in H} $.
Thus, for every  $k \in K$,
there is a topological space $X_ \kappa \in T$
which is not $E_k$-compact, but  
which is $D_h$-compact, for each $h \in H$.   
Then the Frechet disjoint union of the  $X_ \kappa $'s
witnesses the failure of $ (E_k) _{k \in K}  \leq _{ T, C} (D_h) _{h \in H} $,
by \cite[Proposition 8]{nuotop}.

The argument for $ \leq _{ CMS}$ is similar, by taking
a union of logics.
\end{proof}

\end{document}